\numberwithin{equation}{section}
\theoremstyle{plain}
\newtheorem{Th}{Theorem}[section]
\newtheorem{Prop}[Th]{Proposition}
\newtheorem{Corol}[Th]{Corollary}
\newtheorem{Lemma}[Th]{Lemma}
\newtheorem{theorem*}{Theorem}[]
\theoremstyle{definition}
\newtheorem{Def}[Th]{Definition}
\newtheorem{Ex}[Th]{Example}
\theoremstyle{remark}
\newcommand{\ds}{\displaystyle}
\newcommand{\R}{{\mathbb R}}
\newcommand{\N}{{\mathbb N}}
\newcommand{\rank}{{\rm rank\,}}
\newcommand{\de}{{\partial\,}}
\newcommand{\id}{{\rm id\,}}
\newcommand{\Vol}{{\rm Vol\,}}
\begin{document}
\date{}
\title{\textsc{\Large{\bf BOUNDS OF HAUSDORFF MEASURES\\OF TAME SETS}}}
\maketitle
{
{\baselineskip=12pt
\centerline{{Ta L\^e Loi}}

\centerline{\textit{\footnotesize{University of Dalat}}}

\centerline{\textit{\footnotesize{1 Phu Dong Thien Vuong, Dalat, Vietnam}}}

\centerline{\textit{\footnotesize{E-mail: loitl@dlu.edu.vn}}}

\centerline{}

\centerline{{Phan Phien}}

\centerline{\textit{\footnotesize{Nhatrang College of Education}}}

\centerline{\textit{\footnotesize{1 Nguyen Chanh, Nhatrang, Vietnam}}}

\centerline{\textit{\footnotesize{E-mail: phieens@yahoo.com}}}

\centerline{}

\centerline{\textit{\footnotesize{To the memory of Professor Nguyen Huu Duc}}}
}
{\baselineskip=17pt
\begin{abstract}
In this paper we present some bounds of Hausdorff measures of
objects definable in o-minimal structures: sets, fibers of maps,
inverse images of curves of maps, etc. Moreover, we also give some
explicit bounds for semi-algebraic or semi-Pfaffian cases, which
depend only on the combinatoric data representing the objects
involved.

\vspace{8pt}\noindent
{\bf Keywords} o-minimal structures, Hausdorff measures

\vspace{8pt}\noindent
{\bf 2010 Mathematics Subject Classification}  14J17, 14P10, 53C65
\end{abstract}
\renewcommand{\thefootnote}{}
\footnote{This research is supported  by Vietnam's National Foundation for Science and Technology Development (NAFOSTED).}
\section{Introduction}
Considering the upper bounds for the lengths of curves contained in
a disk, the areas of surfaces in a ball, or generally, the Hausdorff
measures of subsets of a ball, one can see that if the numbers of
points of the intersections of the curves or the surfaces with the
generic lines are bounded, then their lengths or areas could be
estimated (see Figure \ref{h411} for an example). Note that, spirals or oscillations do not have finite
numbers of points of intersections with generic lines, so they can
have infinite lengths in certain disks (see Figure \ref{h412} for an example). The objects of o-minimal
structures have the finiteness of number of connected components
(see \cite{D1}, \cite{D-M}, \cite{C} and \cite{L1}), and
integral-geometric methods allow us to estimate Hausdorff measures
of sets via the numbers of connected components of the intersections
of the sets with generic affine subspaces of appropriate dimensions
(see \cite{F} and Figure \ref{h414} for examples).
\begin{figure}[htb]
\centering
  \includegraphics[width=0.23\textwidth,angle=0]{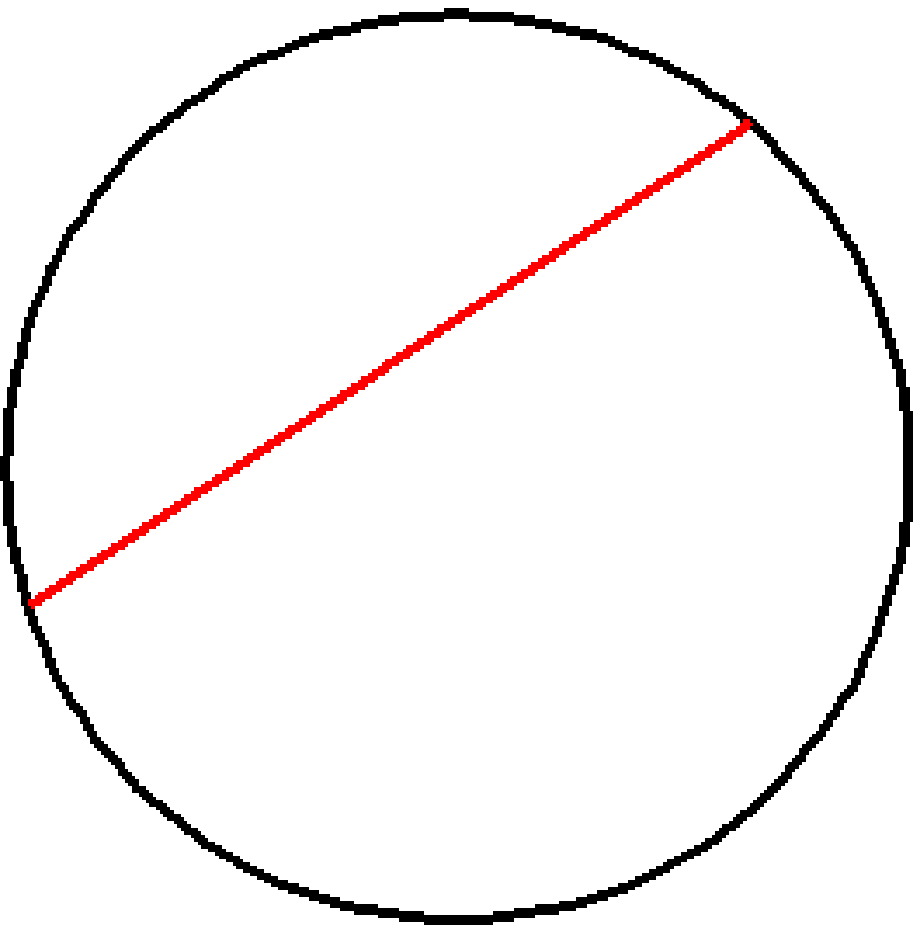}
  \includegraphics[width=0.1\textwidth,angle=0]{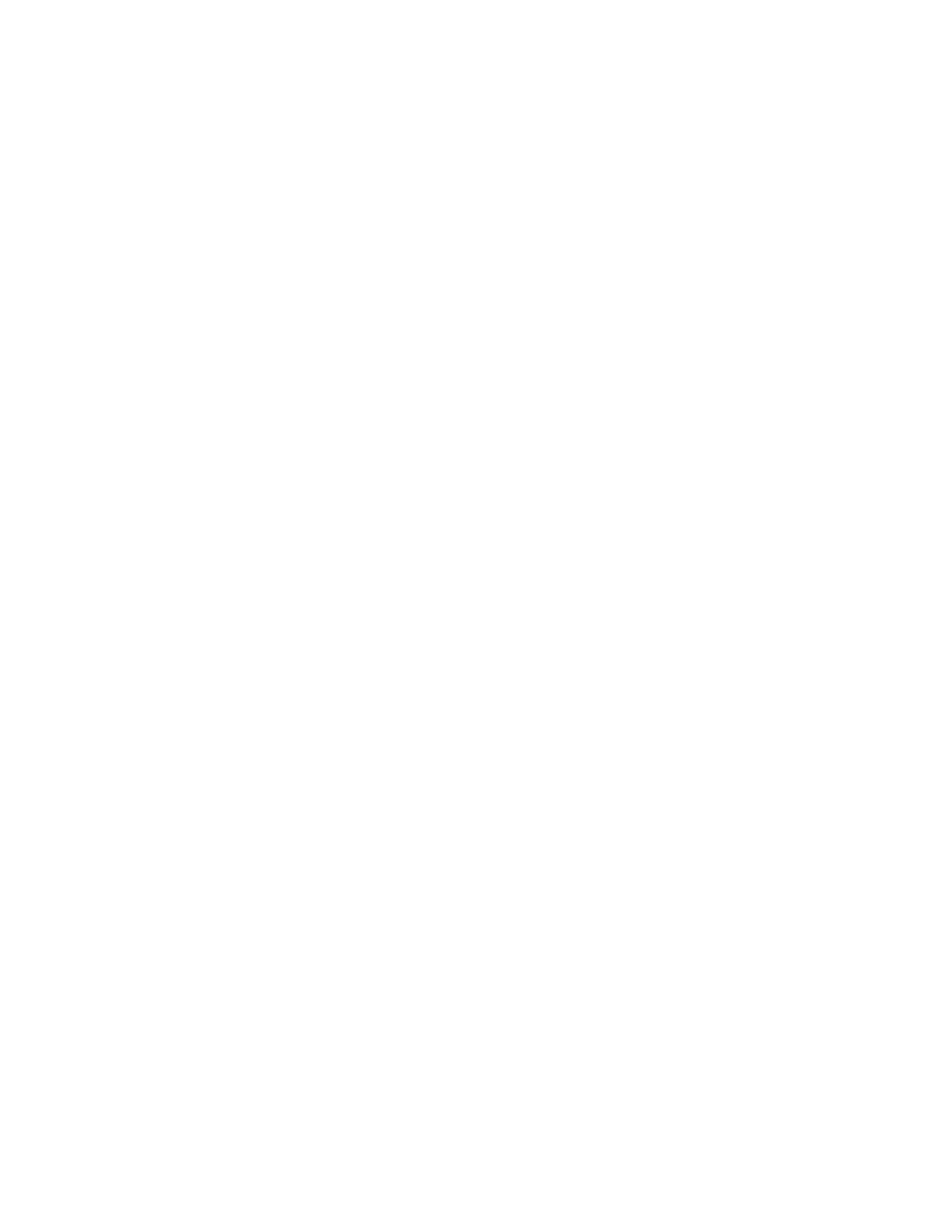}
  \includegraphics[width=0.23\textwidth,angle=0]{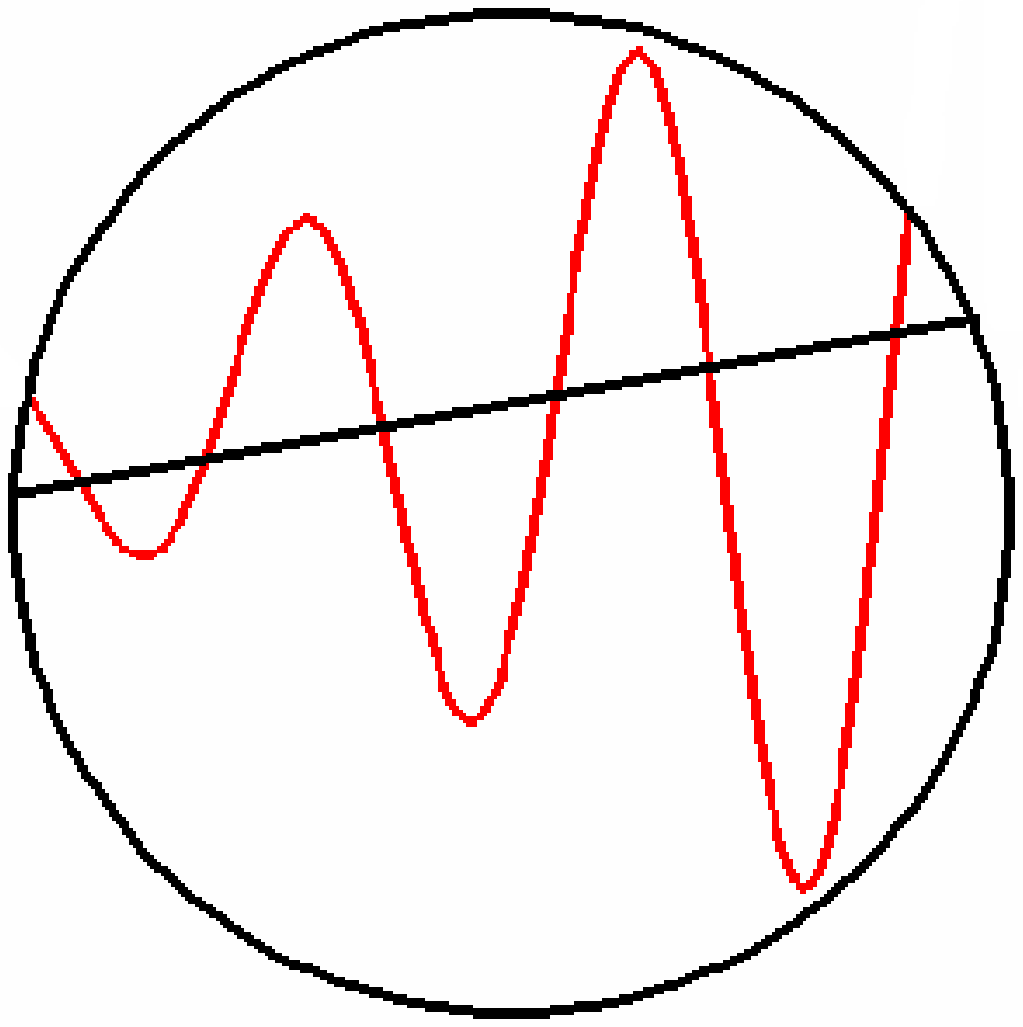}
  \caption{$l\leq2r \ \ \ \ \ \ \ \ \ \qquad $ $l\leq 4dr$}\label{h411}
 \end{figure}
\begin{figure}[htb]
\centering
  \includegraphics[width=0.24\textwidth,angle=0]{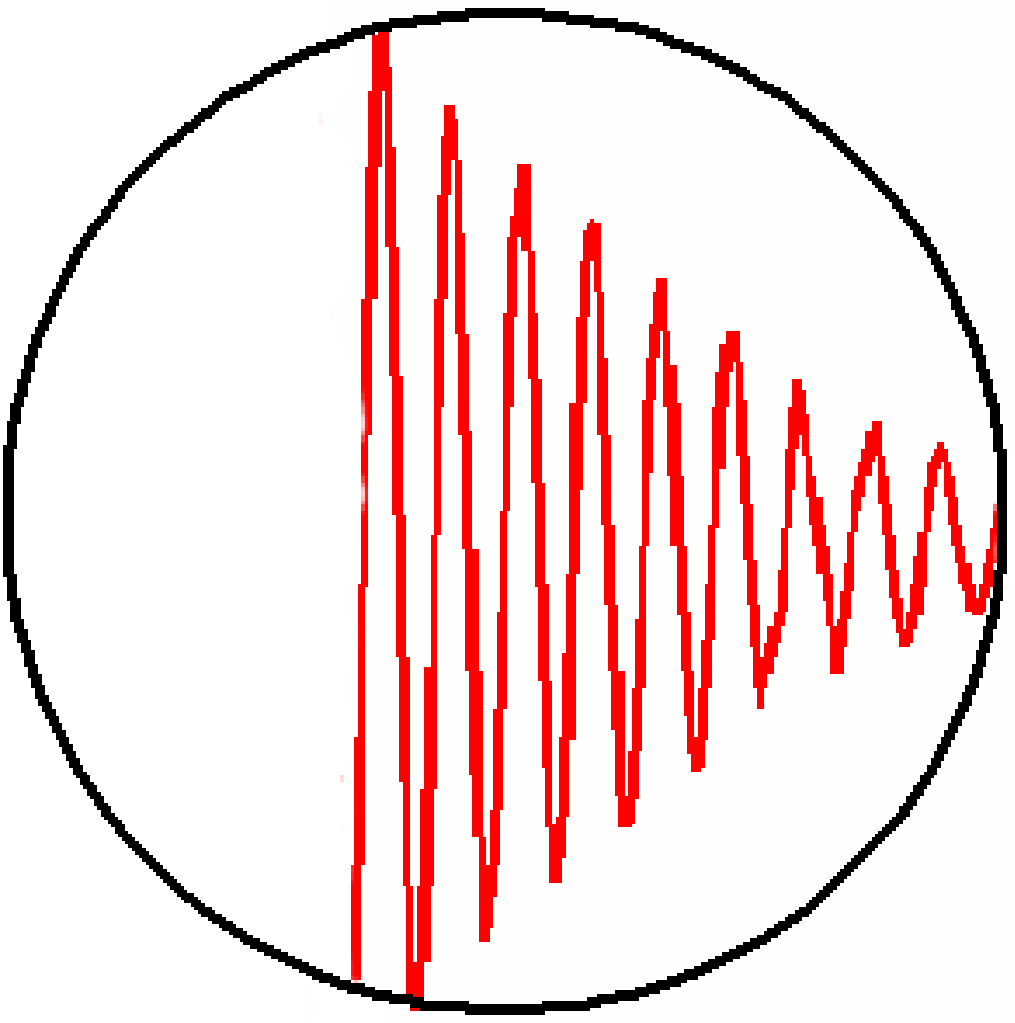}
  \includegraphics[width=0.1\textwidth,angle=0]{trang.eps}
  \includegraphics[width=0.24\textwidth,angle=0]{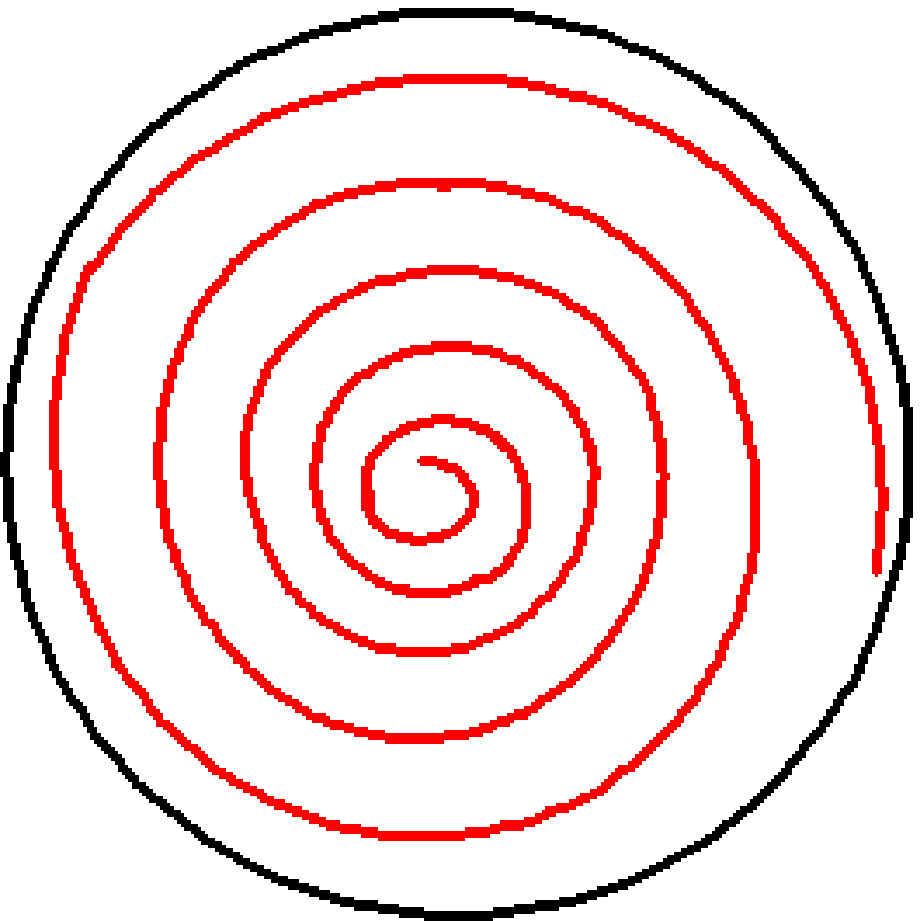}
   \includegraphics[width=0.1\textwidth,angle=0]{trang.eps}
  \includegraphics[width=0.24\textwidth,angle=0]{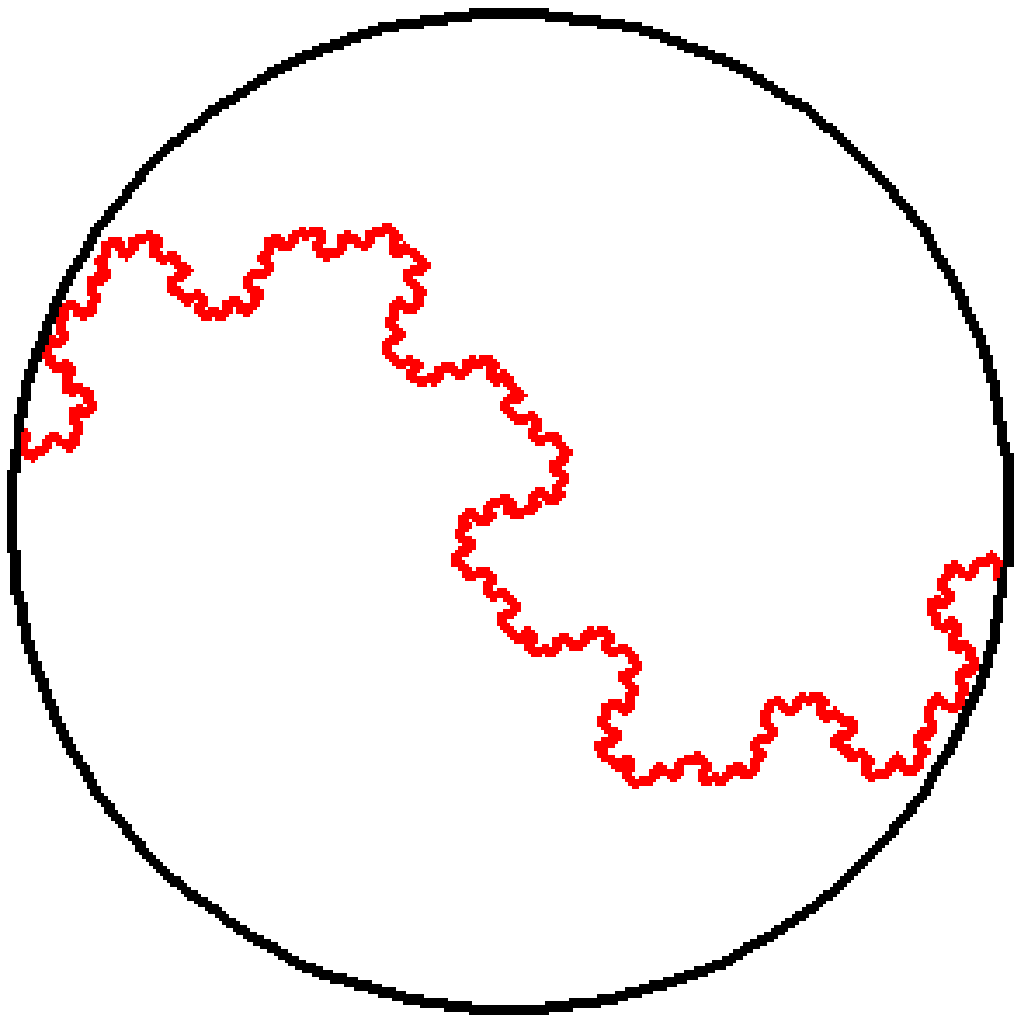}
  \caption{\ $l=\infty$}\label{h412}
\end{figure}
 \begin{figure}[htb]
   \centering
   \includegraphics[width=0.4\textwidth,angle=0]{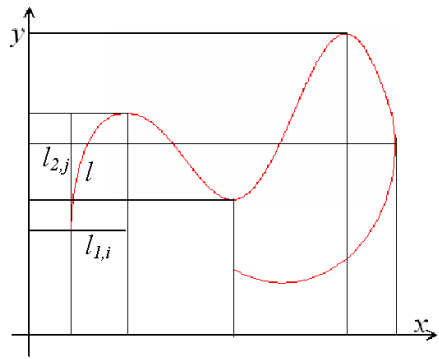}
   \caption{\ $l\leq \sum_i\sum_j (l_{1,i} + l_{2,j})$} \label{h414}
    \end{figure}

  For these reasons, in this paper, we shall use
 integral-geometric methods to give some estimates of Hausdorff
 measures of objects definable in o-minimal structures:
  sets, fibers of maps, inverse images of curves of maps, etc. They can be considered as a
  generalization and refinement of some results of \cite{H}. Moreover, we also give some
  explicit
 bounds for semi-algebraic and semi-Pfaffian cases (relying on the
 results in \cite{B-P-R}, \cite{G-V},\cite{K},\cite{G-V-Z}, and
 \cite{Z}), which depend only on the combinatoric data representing
 the objects involved. These relate to some results in \cite{Y},
 \cite{Y-C}, \cite{D2} and \cite{D-K}.

In section 2 we shall give some definitions. The results and
examples will be stated and proved in sections 3 - 6.
\section{Definitions}
We give here some definitions and notations that will be used later.

\begin{Def}\label{O-minimal structures.} An {\it o-minimal structure}
on the real field $(\R,+,\cdot)$ is a sequence ${\mathcal
D}=({\mathcal D}_n)_{n\in\N}$ such that the following  conditions
are satisfied for all $n\in\N$:
\begin{itemize}
\item  ${\mathcal D}_n$ is a Boolean algebra of subsets of $\R^n$.
\item If $A\in{\mathcal D}_n$, then $A\times \R$ and $\R\times A \in{\mathcal D}_{n+1}$.
\item If $A\in{\mathcal D}_{n+1}$, then $\pi(A)\in{\mathcal D}_n$, where $\pi:\R^{n+1}\to\R^n$ is the projection on the first $n$ coordinates.
\item ${\mathcal D}_n$ contains $\{x\in\R^n: P(x)=0 \}$,  for every polynomial
$P\in \R[X_1,\ldots,X_n]$.
\item Each set in ${\mathcal D}_1$ is a finite union of intervals and points.
\end{itemize}
A set belonging to ${\mathcal D}$ is said to be  {\it definable } (in that structure).
{\it Definable  maps } in structure ${\mathcal D}$ are maps whose graphs are definable
sets in  $\mathcal D$.\\
The class of semi-algebraic sets and the class generated by
semi-Pfaffian sets (\cite{K} and \cite{W}) are examples of such
structures, and there are many interesting classes of sets which
have been proved to be o-minimal. For important properties of
o-minimal structures we refer the readers to \cite{D1}, \cite{D-M},
\cite{C}, \cite{L1} and \cite{W}.
 Note that by Cell Decomposition \cite[Chapter 3 Theorem 2.11]{D1}, the dimension of a definable set $A$
 is defined by
\[ \dim A=\max\{ \dim C: C \textrm{ is a } C^1 \textrm{ submanifold contained in } A\}.\]
In this note we fix an o-minimal structure on $(\R,+,\cdot)$.
``Definable'' means definable in this structure. \end{Def}

\begin{Def}\label{Diagrams of semi-algebraic sets} Let $A \subset \R^m$
be a semi-algebraic set represented by $A =
\cup_{i=1}^p\cap_{j=1}^{s_i}A_{ij}$, where each $A_{ij}$ has the
form:
\[\quad\quad\{x \in \R^m: p_{ij}(x) \star 0\},\]
where $p_{ij}$ is a polynomial of degree $d_{ij}$ and $\star \in \{>, =\}$.\\
The set of data $D = D(A) = (m, p, s_1, \ldots, s_p, (d_{ij},
i=1,\cdots,p, j=1,\cdots,s_i))$
 is called the \textit{diagram} of the set $A$.
 \end{Def}

 \begin{Def}\label{Formats of Semi-Pfaffian sets}
A \textit{Pfaffian chain} of \textit{length} $l \geq 0$ and
\textit{degree} $\alpha \geq 1$ in an open domain $U \subseteq \R^m$
is a sequence of analytic functions $f = (f_1, \ldots, f_l)$ in $U$
satisfying a system of
Pfaffian equations 
\[\frac{\de f_i}{\de x_j}(x) = P_{ij}(x, f_1(x), \ldots, f_i(x)),\
\forall x\in U \ \ (1 \leq i \leq l, ~1 \leq j\leq m). \label{de1}\]
where  $ P_{ij}$ are polynomials  of degree not exceeding $\alpha$.\\
We say that $q$ is a \textit{\textit{Pfaffian function}} of degree $\beta$ with the Pfaffian
chain $f$ if there exists a polynomial $Q$ of degree not exceeding $\beta$ such that 
\[q(x) = Q(x, f_1(x), \ldots, f_l(x)), ~~\forall x \in U.\]
Let $\mathcal{P} = \{p_1, \ldots, p_s\}$ be a set of Pfaffian
functions. A quantifier-free \textit{formula} (QF formula) with
atoms in $\mathcal{P}$ is constructed as follows:
\begin{itemize}
 \item An atom is of the form $p_i \star 0$,
 where $ 1 \leq i \leq s$ and $\star \in \{>,=,< \}$. It is a QF formula;
 \item If $\Phi$ and $\Psi$ are QF formulae, then their conjunction $\Phi \wedge
 \Psi$, their disjunction $\Phi \vee \Psi$, and the negation $\neg\Phi$ are QF formulae.
\end{itemize}
A set $A \subseteq U$ is called \textit{semi-Pfaffian} if there
exists a finite set $\mathcal{P}$ of Pfaffian functions and a QF
formula $\Phi$ with atoms in $\mathcal{P}$ such that
\[A = \{x \in U :  \Phi(x)\}.\]
Let $A$ be a semi-Pfaffian set as above. Then the \textit{format} of
$A$ is the set of data $F = F(A) = (m, l, \alpha, \beta, s)$,
where $m$ is the number of variables, $l$ is the length of $f$,
$\alpha$ is the maximum of the degrees of the polynomials $P_{ij}$,
$\beta$ is the maximum of the degrees of the functions in
$\mathcal{P}$, and $s$ is the number of the functions in
$\mathcal{P}$.
\end{Def}

\begin{Def}\label{A formula of integral geometric measure} Let $m$ be a
positive integer. For each $k\in\{0,\ldots, m\}$, let
$\mathcal{H}^k(A)$ denote the k-dimensional Hausdorff measure of
$A\subset \R^m$. Let $O^*(m,k)$ denote the space of all orthogonal
projections of $\R^m$ onto $\R^k$, i.e.
\[ O^*(m,k)=\{p |\ p:\R^m\to\R^k \textrm{ linear and } p\circ
p^*=\id_{\R^k}\}.\] The orthogonal group $O(m)$ acts transitively on
$O^*(m,k)$ through right multiplication. This action induces a
unique invariant measure $\theta^*_{m,k}$ over $O^*(m,k)$ with
$\theta^*_{m,k}[O^*(m,k)]=1$.\\
\\
\textbf{The Cauchy-Crofton formula}. Since definable sets can be
partitioned into finitely many $C^1$ submanifolds, by \cite[Theorems 2.10.15 and 3.2.26]{F}, for every $k$-dimensional definable bounded
subset $A$ of $\R^m$, we have
\[ \mathcal{H}^k(A)=c(m,k)\int_{O^*(m,k)}\int_{\R^k}\#(A\cap
p^{-1}(y))dy d\theta^*_{m,k}p,\]
 where $\ c(m,k)=\ds\frac{\Gamma(\frac{m+1}2)\Gamma(\frac
 12)}{\Gamma(\frac{k+1}2)\Gamma(\frac{m-k+1}2)}$, and
 $\Gamma(s)=\int_0^{+\infty}e^{-t}t^{s-1}dt \ (s>0)$.
 \end{Def}

\section{Uniform bounds of the Betti numbers of the fibers}
\begin{Prop}\label{Prop 1}
Let $f: A \rightarrow \mathbb{R}^n$ be a continuous definable map. Let $i \in \mathbb{N}$. Then there exists a positive number $M_i$, such that the i-th Betti numbers of the fibers of $f$ are bounded by $M_i$
\[B_i(f^{-1}(y)) \leq M_i, ~\textrm{for all }~ y \in \mathbb{R}^n.\]
In particular, the numbers of connected components of the fibers of $f$ are uniformly bounded.\\
Moreover, if $f$ is semi-algebraic (resp. semi-Pfaffian), then $M_i$
only depends on the diagram (resp. the format) of $f$.
\end{Prop}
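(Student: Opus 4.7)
The plan is to separate the qualitative o-minimal existence statement from the explicit estimates in the semi-algebraic and semi-Pfaffian settings.

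\medskip

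\textbf{Step 1 (o-minimal existence).} First I would apply the definable (Hardt-type) trivialization theorem for continuous definable maps (see e.g.\ \cite{D1}, \cite{C}): there exists a finite definable partition $\mathbb{R}^n = C_1 \cup \cdots \cup C_N$ such that over each $C_k$ the restriction $f: f^{-1}(C_k) \to C_k$ is definably trivial, i.e.\ definably homeomorphic over $C_k$ to a projection $C_k \times F_k \to C_k$ for some fixed definable fibre $F_k$ (with $F_k = \emptyset$ allowed). Hence each $f^{-1}(y)$ is definably homeomorphic to one of the finitely many $F_k$. Because every definable set admits a finite definable triangulation, each $F_k$ has finite Betti numbers, and setting
\[ M_i := \max_{1 \le k \le N} B_i(F_k) \]
gives the required uniform bound. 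The case $i=0$ in particular yields the uniform bound on the number of connected components.

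\medskip

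\textbf{Step 2 (semi-algebraic case).} For fixed $y \in \mathbb{R}^n$ the fibre $f^{-1}(y)$ is defined inside the domain of $f$ by adjoining the equations expressing that $f$ takes the value $y$. Writing $f$ through the diagram of its graph and substituting the coordinates of $y$ for the last $n$ variables, one sees that $f^{-1}(y)$ is semi-algebraic, and its diagram (number of polynomials, degrees, Boolean structure) is determined by $D(f)$ independently of $y$. Plugging this diagram into the quantitative Betti-number bounds for semi-algebraic sets of Basu--Pollack--Roy \cite{B-P-R} produces an explicit $M_i$ depending only on $D(f)$.

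\medskip

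\textbf{Step 3 (semi-Pfaffian case).} The same bookkeeping works with ``diagram'' replaced by ``format''. The fibre $f^{-1}(y)$ is cut out inside the domain by the same Pfaffian chain and the same family of Pfaffian atoms (with the coordinates of $y$ appearing only as constants), so its format is controlled by $F(f)$ uniformly in $y$. Substituting into the explicit Betti-number bounds for semi-Pfaffian sets due to Khovanskii \cite{K}, Gabrielov--Vorobjov \cite{G-V}, Gabrielov--Vorobjov--Zell \cite{G-V-Z} and Zell \cite{Z} yields an $M_i$ depending only on $F(f)$.

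\medskip

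\textbf{Main obstacle.} The mathematical content is essentially supplied by two quoted inputs: the o-minimal Hardt trivialization, which guarantees that only finitely many homeomorphism types of fibres occur, and the explicit semi-algebraic / semi-Pfaffian Betti number bounds. The remaining work is the bookkeeping that turns ``the diagram/format of $f$'' (defined in the excerpt only for sets) into a controlled diagram/format for each fibre; this is routine but is the step where one has to be most careful, since $f$ is given abstractly as a map and one must encode it through its graph or through the polynomials/Pfaffian atoms describing that graph before invoking \cite{B-P-R}, \cite{K}, \cite{G-V}, \cite{G-V-Z}, \cite{Z}.
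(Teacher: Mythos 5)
Your proposal follows the same route as the paper: the qualitative bound via Hardt's definable trivialization theorem (finitely many homeomorphism types of fibres, each with finite Betti numbers by triangulation), and the explicit semi-algebraic/semi-Pfaffian bounds by quoting \cite{B-P-R}, \cite{G-V} and \cite{K}, \cite{Z}, \cite{G-V-Z}. The paper's proof is just these two citations, so your write-up is essentially an expanded version of the same argument, with the extra (correct) remark that one must pass through the graph of $f$ to control the diagram/format of the fibres uniformly in $y$.
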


\begin{proof} The first part follows from
Hardt's Trivialization Theorem \cite[Chapter 9 Theorem 1.2]{D1}. When $f$ is
semi-algebraic or semi-Pfaffian, the last assertion follows from
\cite{B-P-R},\cite{G-V} or \cite{K},\cite{Z},\cite{G-V-Z}.
\end{proof}

\section{Hausdorff measures of definable sets}
Let $A$ be a subset of $\mathbb{R}^m$. For each $k \in \{0,\ldots,
m\}$, define
\[B_{0,m-k}(A)=\sup \{B_0(A\cap p^{-1}(y)): p\in  O^*(m,k), y \in \mathbb{R}^k\} \]
Note that if $A$ is definable, then applying Proposition \ref{Prop
1} to the canonical projection
\[\{(x,p,y)\in A\times O^*(m,k)\times\R^k: p(x)=y\}\to \{(p,y)\in
O^*(m,k)\times\R^k\},\]
 we  get the boundedness of $B_{0,m-k}(A)$. Moreover, if $A$ is semi-algebraic or semi-Pfaffian, then
$B_{0,m-k}(A)$ is bounded by an explicit constant depending only on
the diagram or the format of $A$ (see the examples below).

\begin{Th}\label{Th31} Let $A, B$ be definable subsets of $\R^m$. Suppose $B$
is compact, $\dim A=k$, and $A\subset B$. Then
\[\mathcal{H}^k(A) \leq c(m,k) B_{0,m-k}(A)\ds\sup_{p\in O^*(m,k)}
\mathcal{H}^k(p(B)).\]
 If moreover $A, B$ are semi-algebraic or
semi-Pfaffian sets, then\
\[\mathcal{H}^k(A) \leq C \ds\sup_{p\in O^*(m,k)}
\mathcal{H}^k(p(B)),\]
 where $C$ is a constant
depending only on the diagram or the format of $A$.
\end{Th}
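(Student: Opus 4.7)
The plan is to apply the Cauchy-Crofton formula from Definition \ref{A formula of integral geometric measure} directly to $A$, and to bound the integrand pointwise (in fact almost everywhere) by the quantity $B_{0,m-k}(A)$. Since the formula already expresses $\mathcal{H}^k(A)$ as a double integral against $\#(A\cap p^{-1}(y))$, the whole proof reduces to two observations: controlling the number-of-preimages function, and controlling the set of $y$ where this function can be nonzero.

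First I would observe that for every $p\in O^*(m,k)$ the set $A\cap p^{-1}(y)$ is definable. Since $\dim A=k$ and $p^{-1}(y)$ is an affine subspace of codimension $k$, o-minimal dimension theory (Cell Decomposition, as cited in Definition \ref{O-minimal structures.}) guarantees that for $k$-a.e.\ $y\in\R^k$ the slice $A\cap p^{-1}(y)$ is either empty or $0$-dimensional. On any such generic slice the cardinality equals the number of connected components, so
\[\#(A\cap p^{-1}(y))=B_0(A\cap p^{-1}(y))\leq B_{0,m-k}(A).\]
Next, from $A\subset B$ we get $p(A)\subset p(B)$, so $\#(A\cap p^{-1}(y))=0$ for $y\notin p(B)$. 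Combining these two facts, for every $p\in O^*(m,k)$
\[\int_{\R^k}\#(A\cap p^{-1}(y))\,dy\;\leq\;B_{0,m-k}(A)\,\mathcal{H}^k(p(B))\;\leq\;B_{0,m-k}(A)\sup_{p\in O^*(m,k)}\mathcal{H}^k(p(B)).\]
Integrating over $O^*(m,k)$ against the probability measure $\theta^*_{m,k}$ and multiplying by $c(m,k)$ yields the first inequality of the theorem.

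For the second inequality, in the semi-algebraic or semi-Pfaffian case, I would invoke Proposition \ref{Prop 1} applied to the canonical projection
\[\{(x,p,y)\in A\times O^*(m,k)\times\R^k: p(x)=y\}\longrightarrow\{(p,y)\in O^*(m,k)\times\R^k\}\]
exactly as suggested in the paragraph preceding the theorem. The uniform bound on $B_0$ of its fibers depends only on the diagram (resp.\ format) of $A$, so $B_{0,m-k}(A)\leq N$ for an explicit $N$ depending only on the combinatorial data of $A$; the constant $C=c(m,k)N$ then works.

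The main obstacle is the almost-everywhere $0$-dimensionality of the slices, which is what lets us replace the cardinality $\#(A\cap p^{-1}(y))$ appearing in the Cauchy-Crofton formula by the Betti number $B_0(A\cap p^{-1}(y))$ that defines $B_{0,m-k}(A)$. Without this step, a positive-dimensional slice would make the integrand infinite while $B_0$ remains finite, and the chain of inequalities would collapse. The statement itself is a standard consequence of definable dimension theory applied to the parameterized family of slices, but it is the only nontrivial point that needs to be pinned down before the Cauchy-Crofton estimate becomes routine.
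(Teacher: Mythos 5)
Your proposal is correct and follows essentially the same route as the paper: the Cauchy--Crofton formula, the observation that generic slices $A\cap p^{-1}(w)$ are at most $0$-dimensional (which the paper derives explicitly from the fiber-dimension formula \cite[Chapter 4 Corollary 1.6]{D1} via the sets $S_p(d)$), the bound $\#(A\cap p^{-1}(w))\leq B_{0,m-k}(A)1_{p(B)}(w)$, and Proposition \ref{Prop 1} for the semi-algebraic/semi-Pfaffian constant. You correctly isolate the almost-everywhere $0$-dimensionality of the slices as the one nontrivial point, which is exactly where the paper spends its effort.
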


\begin{proof}
Let $p \in O^*(m, k)$. Set
\[S_p(d) = \{w \in \mathbb{R}^k: \dim(A \cap p^{-1}(w)) = d\}.\]
Applying \cite[Chapter 4 Corollary 1.6]{D1}, we have
\[\dim(A \cap p^{-1}(S_p(d))) = \dim(S_p(d)) + d.\]
Furthermore,
\[\dim(A \cap p^{-1}(S_p(d))) \leq \dim A = k.\]
So if $\dim(S_p(d)) = k$ then
\[d \leq 0.\]
Therefore, for each $p\in O^*(m, k), \ \dim (A\cap p^{-1}(w)) \leq 0$, for all $w
\in \mathbb{R}^k$ outside a definable set of dimension less than
$k$. By the Cauchy-Crofton formula, we get the estimate
\begin{eqnarray}
\mathcal{H}^k(A) & = & c(m,k)\int_{O^*(m,k)}\int_{\R^k}\#(A\cap
p^{-1}(w))dw d\theta^*_{m,k}p \nonumber \\
& \leq & c(m,k)B_{0,m-k}(A)\int_{O^*(m,k)}\ds\int_{\R^k}1_{p(A)}dw d\theta^*_{m,k}p \nonumber \\
& \leq & c(m,k)B_{0,m-k}(A)\int_{O^*(m,k)}\ds\int_{\R^k}1_{p(B)}dw d\theta^*_{m,k}p \nonumber \\
&\leq & c(m,k)B_{0,m-k}(A)\sup_{p\in O^*(m,k)}\mathcal{H}^k(p(B)).
\nonumber
\end{eqnarray}
The last assertion is followed by Proposition 1.
\end{proof}

\begin{Corol}[c.f. \cite{Y-C} and \cite{D-K}]\label{Corol 1} Let A be a definable
subset of $\mathbb{R}^m$ of dimension k. Then for any ball $\mathbf{B}_r^m$
of radius r in $\mathbb{R}^m$,
\[\mathcal{H}^k(A\cap \mathbf{B}_r^m) \leq c(m,k)B_{0,m-k}(A)\Vol_k(\mathbf{B}_1^k)r^k.\]
\end{Corol}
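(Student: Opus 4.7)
The plan is to recycle the Cauchy--Crofton argument used to prove Theorem \ref{Th31}, applied to the bounded definable set $A':=A\cap\mathbf{B}_r^m$, together with the elementary geometric observation that every orthogonal projection of a Euclidean ball is itself a Euclidean ball of the same radius. Since the statement is trivial when $\dim A'<k$, I may assume $\dim A'=k$, so that the Cauchy--Crofton formula of Definition \ref{A formula of integral geometric measure} applies directly to $A'$:
\[\mathcal{H}^k(A')=c(m,k)\int_{O^*(m,k)}\int_{\R^k}\#(A'\cap p^{-1}(w))\,dw\,d\theta^*_{m,k}p.\]

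Next, I would repeat verbatim the dimension-theoretic step from the proof of Theorem \ref{Th31}, but applied to the ambient set $A$ rather than $A'$: for each $p\in O^*(m,k)$ and almost every $w\in\R^k$ the fiber $A\cap p^{-1}(w)$ has dimension zero, hence is a finite definable set whose cardinality equals $B_0(A\cap p^{-1}(w))\leq B_{0,m-k}(A)$. Since $A'\cap p^{-1}(w)\subseteq A\cap p^{-1}(w)$, the same upper bound passes to $\#(A'\cap p^{-1}(w))$ for almost every $w$.

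The remaining ingredient is purely geometric: for every $p\in O^*(m,k)$ the orthogonal image $p(\mathbf{B}_r^m)$ is exactly the Euclidean ball of radius $r$ in $\R^k$, so
\[\mathcal{H}^k(p(\mathbf{B}_r^m))=\Vol_k(\mathbf{B}_1^k)\,r^k\]
independently of $p$. Substituting the two bounds into the Cauchy--Crofton expression, restricting the inner integrand to its support $p(A')\subseteq p(\mathbf{B}_r^m)$, and using that $\theta^*_{m,k}$ is a probability measure on $O^*(m,k)$, yields the claimed inequality in a single line.

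I do not anticipate any genuine obstacle. The one subtlety worth flagging is that one should \emph{not} simply quote Theorem \ref{Th31} with $B=\overline{\mathbf{B}_r^m}$, because that would furnish the constant $B_{0,m-k}(A\cap\mathbf{B}_r^m)$ rather than the desired $B_{0,m-k}(A)$; intersecting a fiber $A\cap p^{-1}(w)$ with a ball can in principle split a connected component into several. Phrasing the estimate directly on cardinalities along the inclusion $A'\cap p^{-1}(w)\subseteq A\cap p^{-1}(w)$ sidesteps this and is the only place where a little care is required.
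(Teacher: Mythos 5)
Your argument is correct, and it is in substance the same integral-geometric argument as the paper's: the paper proves Corollary \ref{Corol 1} by invoking Theorem \ref{Th31} with the closed ball as the compact set $B$ and using $p(\mathbf{B}_r^m)=\mathbf{B}_r^k$, which is exactly the computation you carry out after unfolding the Cauchy--Crofton step. The one point where you genuinely add something is the subtlety you flag at the end: applied literally, Theorem \ref{Th31} (with $A$ replaced by $A\cap\mathbf{B}_r^m$) yields the constant $B_{0,m-k}(A\cap\mathbf{B}_r^m)$, whereas the paper's one-line proof silently writes $B_{0,m-k}(A)$; since intersecting a fiber with a ball can a priori increase the number of connected components, the inequality $B_{0,m-k}(A\cap\mathbf{B}_r^m)\leq B_{0,m-k}(A)$ is not automatic. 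Your route --- bounding cardinalities of the generically finite fibers along the inclusion $A\cap\mathbf{B}_r^m\cap p^{-1}(w)\subseteq A\cap p^{-1}(w)$ --- closes that small gap cleanly and delivers the constant actually stated in the corollary, so your extra care is warranted rather than superfluous.
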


\begin{proof}
From Theorem \ref{Th31}, we get
\[
\mathcal{H}^k(A\cap \mathbf{B}_r^m)  \leq   c(m,k)B_{0,
m-k}(A)\mathcal{H}^k(\mathbf{B}_r^k) = c(m,k)B_{0,m-k}(A)\Vol_k(\mathbf{B}_1^k)r^k.
\]
\end{proof}

\begin{Ex}\label{Example 1}
\item{\bf Algebraic case.} When $A \subset \mathbb{R}^m$ is a
$k$-dimensional algebraic set of degree $d$, then
\[\mathcal{H}^k(A\cap \mathbf{B}_r^m) \leq c(m,k)d\Vol_k(\mathbf{B}_1^k)r^k.\]
In particular, when A is an algebraic curve of degree d in the
plane, then the length $l(A \cap \mathbf{B}_r^2) \leq c(2,1)d2r=\pi dr$.
\item{\bf Semi-algebraic case.} Generally, when $A \subset \R^m$ is a
k-dimensional semi-algebraic set of diagram $D = (m, p,d,s),$ then
\[\mathcal{H}^k(A\cap \mathbf{B}_r^m) \leq c(m,k)B_0(D)\textrm{Vol}_k(\mathbf{B}_1^k)r^k,\]
where \
$B_0(D)=\ds\frac{2^m}{m!}\ds\sum_{i=1}^p((d_is_i)^m+O(s_i^{m-1}))$ ,
with $d_i=\max_{1\leq j\leq s_i}d_{ij}$ and $m$ considered fixed. 
\item{\bf Semi-Pfaffian case.} We say that $U$ is
a domain of bounded complexity $\gamma$ for the
 Pfaffian chain $f = (f_1, \ldots, f_l)$ if there exists a function g of degree $\gamma$
 in the chain $f$ such that the sets $\{g \geq \varepsilon\}$ form an exhausting family of
 compact subsets of $U$ for $\varepsilon \ll 1$. We call g an exhausting function for $U$.\\
Let $A$ be a k-dimensional semi-Pfaffian set defined by a fixed
Pfaffian chain $f = (f_1, \ldots, f_l)$ of degree $\alpha$ in a
domain $U \subseteq \R^m$ with format $(m, l, \alpha, \beta, s)$,
where $U$ is a domain of bounded complexity $\gamma$ for $f$.
Using \cite[Remark 1.30, Theorem 2.25, and Remark 2.26]{Z} , and applying
Corollary \ref{Corol 1}, we get
\[\mathcal{H}^k(A\cap \mathbf{B}_r^m) \leq c(m,k)(4s+1)^d \mathcal{V}(m, l,
\alpha, \beta^*, \gamma)\Vol_k(\mathbf{B}_1^k)r^k,\] where
\[\mathcal{V}(m, l, \alpha, \beta^*, \gamma) = 2^{\frac{l(l-1)}{2}}
\beta^*(\alpha + \beta^* - 1)^{m-1}\frac{\gamma}{2}[m(\alpha +
\beta^* - 1) + \gamma + \min(m, l)\alpha]^l,\]
 with $\beta^*=\max(\beta, \gamma)$.
\end{Ex}

\section{Uniform bounds of Hausdorff measures of definable fibers}
Let $f: A \rightarrow \R^n$ be a definable map, where
$A\subset\R^m$. For each $k \in \{0, \ldots, \dim A\},$ let
\[I_k(f) = \{y \in \R^n: \dim f^{-1}(y) \leq k\}.\]
Then, by \cite[Chapter 4 Corollary 1.6]{D1}, $I_k(f)$  is definable. Let
\[B_{0,m-k}(f) = \sup\{B_0(f^{-1}(y) \cap p^{-1}(w) \cap \mathbf{B}_r^m(a)): y \in I_k(f),
p\in O^*(m,k),\]
\begin{flushright}
$w \in \R^k, a \in \R^m, r > 0\}.$
\end{flushright}
Note that applying Proposition \ref{Prop 1} to the canonical projection \\
$\{(x,y,p, w, a, r)\in \R^m\times \R^n\times
O^*(m,k)\times\R^k\times\R^m\times\R:$
\begin{flushright}$x\in A, y\in I_k(f),
f(x) = y, p(x) = w, \|x - a\| \leq r\} $ \end{flushright}
\begin{flushright}$\rightarrow \{(y,p,w,a,r)\in \R^n\times O^*(m,k) \times \R^k \times
\R^m \times \R\},$
\end{flushright}
we have the boundedness of $B_{0,m-k}(f)$. When $f$ is
semi-algebraic (resp. semi-Pfaffian), then $B_{0,m-k}(f)$ is bounded
by a constant depending only on the diagram (resp. format) of $f$.

\begin{Th}\label{Th2}
Let $f: A \rightarrow \R^n$ be a continuous definable map, where $A$
is a compact subset of $\R^m$. Then for each $k \in \{0, \ldots,
\dim A\}$, we have
\[\mathcal{H}^k(f^{-1}(y)) \leq c(m,k) B_{0,m-k}(f)\sup_{p\in O^*(m,k)}\mathcal{H}^k(p(A)), ~\textrm{for all}~ y \in I_k(f).\]
In particular, if f is semi-algebraic or semi-Pfaffian map, then
\[\mathcal{H}^k(f^{-1}(y)) \leq C_k \sup_{p\in O^*(m,k)}\mathcal{H}^k(p(A)), ~\textrm{for all}~ y \in I_k(f),\]
where $C_k$ is a constant depending only on the diagram or the
format of $f$.
\end{Th}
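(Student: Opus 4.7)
The plan is to adapt, fiberwise, the Cauchy--Crofton argument that proved Theorem~\ref{Th31}, using the ambient compact set $A$ itself as the envelope. Fix $y\in I_k(f)$. Then $f^{-1}(y)$ is definable, closed in the compact set $A$ hence compact, and $\dim f^{-1}(y)\leq k$. If $\dim f^{-1}(y)<k$ then $\mathcal{H}^k(f^{-1}(y))=0$ and the inequality is trivial, so I may assume $\dim f^{-1}(y)=k$.

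The first step is to reproduce the dimension computation from the proof of Theorem~\ref{Th31} with $A$ replaced by $f^{-1}(y)$: by \cite[Chapter~4, Corollary~1.6]{D1}, for each $p\in O^*(m,k)$ the definable set
\[\{w\in\R^k:\ \dim(f^{-1}(y)\cap p^{-1}(w))\geq 1\}\]
has dimension strictly less than $k$, so has $\mathcal{H}^k$-measure zero. Hence for almost every $w$ the intersection $f^{-1}(y)\cap p^{-1}(w)$ is a $0$-dimensional bounded definable set, which is finite with cardinality equal to $B_0(f^{-1}(y)\cap p^{-1}(w))$.

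The second step converts this pointwise count into the uniform bound $B_{0,m-k}(f)$. Since $A$ is compact it lies inside some closed ball $\mathbf{B}_r^m(a)$, and therefore
\[f^{-1}(y)\cap p^{-1}(w)=f^{-1}(y)\cap p^{-1}(w)\cap \mathbf{B}_r^m(a);\]
by the very definition of $B_{0,m-k}(f)$ given just before the statement,
\[\#\bigl(f^{-1}(y)\cap p^{-1}(w)\bigr)\leq B_{0,m-k}(f)\]
for every $y\in I_k(f)$, $p\in O^*(m,k)$, and generic $w$. Applying the Cauchy--Crofton formula to the $k$-dimensional compact definable set $f^{-1}(y)$, replacing the counting function by $B_{0,m-k}(f)\cdot 1_{p(f^{-1}(y))}$, and then enlarging $p(f^{-1}(y))$ to $p(A)$ yields exactly
\[\mathcal{H}^k(f^{-1}(y))\leq c(m,k)\,B_{0,m-k}(f)\sup_{p\in O^*(m,k)}\mathcal{H}^k(p(A)).\]

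For the final assertion in the semi-algebraic or semi-Pfaffian case, I would invoke Proposition~\ref{Prop 1} applied to the canonical projection displayed just before the statement; the uniform bound on the zeroth Betti number produced there, combined with the effective estimates from \cite{B-P-R}, \cite{G-V}, \cite{K}, \cite{G-V-Z}, \cite{Z}, supplies an explicit constant $C_k$ in terms of the diagram or the format of $f$. The only nontrivial move in the whole argument is the transition from the pointwise quantity $B_0(f^{-1}(y)\cap p^{-1}(w))$ to the uniform sup $B_{0,m-k}(f)$, which the compactness of $A$ renders immediate; no further obstacle is anticipated.
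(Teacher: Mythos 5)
Your proposal is correct and follows essentially the same route as the paper's own proof: the dimension argument via \cite[Chapter 4]{D1} to get generically finite fibers of $p$ restricted to $f^{-1}(y)$, the Cauchy--Crofton formula with the counting function replaced by $B_{0,m-k}(f)\cdot 1_{p(A)}$, and the appeal to the remark preceding the theorem for the effective constants. Your added observations (disposing of the case $\dim f^{-1}(y)<k$ and noting that compactness of $A$ lets the ball in the definition of $B_{0,m-k}(f)$ absorb the whole fiber) merely make explicit what the paper leaves implicit.
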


\begin{proof}
By \cite[Chapter 4 Proposition 1.5 and Corollary 1.6]{D1}, for each $p\in O^*(m,k)$ and $y \in
I_k(f), \ \dim (f^{-1}(y) \cap p^{-1}(w)) \leq0$, for all $w
\in \R^k$ outside a definable set of dimension less than $k$. By the
Cauchy-Crofton formula, when $y\in I_k(f)$, we get
\begin{eqnarray}
\mathcal{H}^k(f^{-1}(y)) & = &
c(m,k)\int_{O^*(m,k)}\int_{\R^k}\#(f^{-1}(y)\cap p^{-1}(w))dw
d\theta^*_{m,k}p\nonumber \\
& \leq & c(m,k)B_{0,m-k}(f)\int_{O^*(m,k)}\int_{\R^k} 1_{p(A)}dw
d\theta^*_{m,k}p\nonumber \\
&\leq & c(m,k)B_{0,m-k}(f)\sup_{p\in
O^*(m,k)}\mathcal{H}^k(p(A)).\nonumber
\end{eqnarray}
If $f$ is semi-algebraic or semi-Pfaffian, then using the note above
we have the last assertion.
\end{proof}

\begin{Corol}\label{Corol 2}
Let $f: A \rightarrow \R^n$ be a continuous definable map, where
$A\subset\R^m$. Then for each $k \in \{0, \ldots, \dim A\}$ and for
any ball $\mathbf{B}_r^m$ of radius r in $\R^m$,
\[\mathcal{H}^k(f^{-1}(y) \cap \mathbf{B}_r^m) \leq c(m,k) B_{0,m-k}(f)\Vol_k(\mathbf{B}_1^k)r^k, ~\textrm{for all}~ y \in I_k(f).\]
In particular, if f is semi-algebraic or semi-Pfaffian map, then
\[\mathcal{H}^k(f^{-1}(y) \cap \mathbf{B}_r^m) \leq C_k r^k, ~\textrm{for all}~ y \in I_k(f),\]
where $C_k$ is a constant depending only on the diagram or the
format of $f$.
\end{Corol}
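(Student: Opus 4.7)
The plan is to transplant the derivation of Corollary~\ref{Corol 1} from Theorem~\ref{Th31} to the fiber setting: I would apply Theorem~\ref{Th2} to the restriction of $f$ to the bounded definable set $A \cap \overline{\mathbf{B}_r^m(a)}$. The key geometric observation, which is exactly the same one used in Corollary~\ref{Corol 1}, is that any $p \in O^*(m,k)$ is a partial isometry (since $p\circ p^* = \id_{\R^k}$), so it sends the closed $m$-ball of radius $r$ onto a closed $k$-ball of radius $r$ in $\R^k$; consequently
\[\sup_{p\in O^*(m,k)} \mathcal{H}^k\bigl(p(A \cap \overline{\mathbf{B}_r^m(a)})\bigr) \;\leq\; \mathcal{H}^k(\mathbf{B}_r^k) \;=\; \Vol_k(\mathbf{B}_1^k)\, r^k.\]

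I would then substitute this estimate into the inequality of Theorem~\ref{Th2}, noting that $B_{0,m-k}\bigl(f|_{A\cap\overline{\mathbf{B}_r^m(a)}}\bigr) \leq B_{0,m-k}(f)$, because the supremum defining the former is taken over a subfamily of the collection used for the latter. This yields the first inequality. For the semi-algebraic and semi-Pfaffian refinements, the inequality $B_{0,m-k}(f) \leq C_k$ with $C_k$ depending only on the diagram or the format of $f$ is precisely the content of the remark following the definition of $B_{0,m-k}(f)$ above, which is a direct application of Proposition~\ref{Prop 1}. Thus the uniform bound $C_k r^k$ drops out immediately.

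The only minor technical point is that Theorem~\ref{Th2} requires $A$ compact, whereas here $A$ is merely assumed to be a definable subset of $\R^m$. If $A$ is not closed, then $A\cap\overline{\mathbf{B}_r^m(a)}$ need not be compact; in that case I would instead apply the Cauchy--Crofton formula directly to the bounded definable set $S := f^{-1}(y)\cap\mathbf{B}_r^m(a)$, which has dimension at most $k$ because $y\in I_k(f)$. Following the proof of Theorem~\ref{Th2} verbatim, the dimension argument via \cite[Chapter~4 Proposition~1.5 and Corollary~1.6]{D1} ensures that the generic fiber $S\cap p^{-1}(w)$ is zero-dimensional, the integrand $\#(S\cap p^{-1}(w))$ is bounded pointwise by $B_{0,m-k}(f)$ (by its very definition, which already incorporates intersections with arbitrary balls), and the integrand is supported in $p(\mathbf{B}_r^m(a)) = \mathbf{B}_r^k(p(a))$. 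Integrating against the probability measure $\theta^*_{m,k}$ then produces the claimed bound.

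I do not foresee any genuine obstacle: the statement is essentially the fiber-version counterpart of Corollary~\ref{Corol 1}, and once one observes $p(\mathbf{B}_r^m(a)) = \mathbf{B}_r^k(p(a))$ and that $B_{0,m-k}(f)$ is designed to absorb the ball restriction, both inequalities follow by direct substitution.
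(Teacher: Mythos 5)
Your proposal is correct and follows the route the paper intends: the corollary is stated without proof, as the immediate fiber-analogue of Corollary~\ref{Corol 1}, obtained by running the Cauchy--Crofton argument of Theorem~\ref{Th2} on $f^{-1}(y)\cap\mathbf{B}_r^m$ and using that the definition of $B_{0,m-k}(f)$ already quantifies over balls, together with $p(\mathbf{B}_r^m)=\mathbf{B}_r^k(p(a))$. One small caveat: your claim that $B_{0,m-k}\bigl(f|_{A\cap\overline{\mathbf{B}_r^m(a)}}\bigr)\leq B_{0,m-k}(f)$ "because the supremum is over a subfamily" is not literally right (the restricted supremum involves sets cut by \emph{two} balls, which can have more connected components than a cut by either one), but this does not matter since your fallback argument --- applying Cauchy--Crofton directly to $S=f^{-1}(y)\cap\mathbf{B}_r^m$, where the integrand $\#(S\cap p^{-1}(w))$ is bounded by $B_{0,m-k}(f)$ by its very definition --- is the clean and correct derivation.
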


\begin{Ex}\label{Example 2.}
Let $\alpha_1,\ldots, \alpha_q\in\N^m$.
Consider the family of algebraic surfaces  in the positive orthant
determined by  the `fewnomials' having only at most the monomials $
x^{\alpha_i}, i=1,\ldots, q$:
 \[A=\{(x,a): x=(x_1,\ldots,x_m)\in\R^m, a=(a_1, \ldots, a_q)\in\R^q, \]
\[x_1>0,\ldots, x_m>0, \ds\sum_{i=1}^qa_ix^{\alpha_i}=0\}.\]
Let $f$ be the projection $(x,a)\mapsto a$ and $A_a=A\cap f^{-1}(a)$.\\
When $k=m-1$, and $\dim A_a\leq m-1$ from the theorem we have the
following estimates:
\\
\underline{Estimate 1}. Since $A_a$ is a
semi-algebraic set of diagram $(m, 1, m+1, (1,\cdots,1,d))$, with $d
= \max_i |\alpha_i|$, using the Oleinik-Petrovskii-Thom-Milnor bound
(see \cite{O-P},\cite{Th}, \cite{M}), we get
\[ {\mathcal H}^{m-1}(A_a\cap \mathbf{B}_r^m)\leq c(m,m-1)B_0(D(A_a))\textrm{Vol}_{m-1}(\mathbf{B}_1^{m-1}) r^{m-1},\]
where $B_0(D(A_a)) \leq \frac{1}{2}(m+d)(m+d-1)^{m-1}$. 
\\
\underline{Estimate 2}.  Using the Khovanskii bound \cite[Chapter III Corollary 5]{K}, we get
\[ {\mathcal H}^{m-1}(A_a\cap \mathbf{B}_r^m)\leq c(m ,m-1)B_{0}(f)\textrm{Vol}_{m-1}(\mathbf{B}_1^{m-1}) r^{m-1},
\]
where $B_0(f)\leq 2^{\frac{q(q-1)}2}(2m)^{m-1}(2m^2-m+1)^q$. \\
\end{Ex}

A family $(C_q)_{q\in Q}$ is called a {\it definable family of
definable curves} in $B\subset \R^n$ if there exists a definable map
$\gamma: Q\times [0,1]\to B$, $\gamma(q,t)=\gamma_q(t)$, such that
for each $q\in Q$, $\gamma_q:[0,1]\to B$ is continuous, injective
and $C_q=\gamma_q([0,1])$.

Let $\Phi^1$ denote the set of all odd, strictly increasing $C^1$
definable bijection from $\R$ to $\R$ and flat at 0.

\begin{Th}\label{Th3}
Let $f: A \rightarrow \R^n$ be a continuous definable map, and
$A\subset \R^m$ be a compact set.  Then for each $k \in \{0, \ldots,
\dim A\}$, compact definable subset $B$ of $I_k(f)$ and definable
family of definable curves $(C_q)_{q\in Q}$ in $\R^n$, there exists
$\varphi \in \Phi^1$, such that
\[\mathcal{H}^{k+1}(f^{-1}(C_q\cap B) ) \leq \varphi^{-1}(\mathcal{H}^1(C_q)),
~\textrm{for all }~ q\in Q.\]
 In particular, if f is semi-algebraic,
and $(C_q)_{q\in Q}$ is a semi-algebraic family  of semi-algebraic
curves, then there exist $C, \alpha>0$ such that
\[\mathcal{H}^{k+1}(f^{-1}(C_q\cap B)) \leq C(\mathcal{H}^1(C_q))^\alpha,
 ~\textrm{for all }~ q\in Q.\]
\end{Th}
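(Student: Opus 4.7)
The plan is to reduce the family statement to a one-variable Lojasiewicz-type inequality for a scalar definable function. Introduce
\[
\sigma(q) := \mathcal{H}^{k+1}(f^{-1}(C_q \cap B)), \qquad \rho(q) := \mathcal{H}^1(C_q),
\]
both definable on $Q$. Applying Corollary \ref{Corol 1} to the definable family $\{f^{-1}(C_q \cap B)\}_{q\in Q}$ (each of dimension at most $k+1$), together with Proposition \ref{Prop 1} to bound the corresponding $B_{0,\cdot}$ constant uniformly over $q$, one obtains $K$ with $\mathcal{H}^{k+1}(f^{-1}(C_q\cap B) \cap \mathbf{B}_r^m) \leq K r^{k+1}$ for every $q$ and every ball of radius $r$; in particular $\sigma$ is uniformly bounded. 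Corollary \ref{Corol 2} supplies a uniform bound $\mathcal{H}^k(f^{-1}(y)) \leq M$ for all $y\in B$.

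Define the envelope $\omega(\epsilon) := \sup\{\sigma(q) : q\in Q,\ \rho(q) \leq \epsilon\}$, a definable, nondecreasing, bounded function on $[0,\infty)$. Granted that $\omega(\epsilon) \to 0$ as $\epsilon \to 0^+$, a standard o-minimal Lojasiewicz-type inequality for one-variable definable functions vanishing at $0$ produces $\varphi \in \Phi^1$ with $\omega(\epsilon) \leq \varphi^{-1}(\epsilon)$ for all $\epsilon \geq 0$, whence
\[
\sigma(q) \leq \omega(\rho(q)) \leq \varphi^{-1}(\mathcal{H}^1(C_q)),
\]
which is the stated inequality. In the semi-algebraic case, $\omega$ is itself semi-algebraic with $\omega(0)=0$, and the classical Lojasiewicz inequality gives $\omega(\epsilon) \leq C\epsilon^\alpha$ for some $C,\alpha > 0$, producing the polynomial bound.

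The main obstacle is establishing $\omega(0^+)=0$. Argue by contradiction: suppose $\rho(q_n) \to 0$ while $\sigma(q_n) \geq c > 0$. Since each $C_{q_n}$ is the image of an injective continuous definable curve, it is a simple rectifiable arc with $\text{diam}(C_{q_n}) \leq \mathcal{H}^1(C_{q_n}) = \rho(q_n) \to 0$. Picking $y_n \in C_{q_n} \cap B$ (otherwise $\sigma(q_n)=0$) and passing to a subsequence so that $y_n \to y^* \in B$ by compactness of $B$, we have $C_{q_n}\cap B \subset \{y \in \R^n : \|y - y^*\| \leq \epsilon_n\}$ with $\epsilon_n \to 0$. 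A standard compactness-continuity argument using that $A$ is compact and $f$ is continuous produces
\[
\delta_n := \sup\{\, d(x,\, f^{-1}(y^*)) : x\in A,\ \|f(x)-y^*\|\leq \epsilon_n\} \longrightarrow 0.
\]
The fiber $f^{-1}(y^*)$ is a compact definable set of dimension $\leq k$ with $\mathcal{H}^k\leq M$, hence can be covered by at most $c_0 M/\delta_n^k$ closed balls of radius $\delta_n$; doubling the radii yields a covering of the $\delta_n$-neighborhood of $f^{-1}(y^*)$ by the same number of balls of radius $2\delta_n$. Applying the uniform bound $Kr^{k+1}$ on each,
\[
\sigma(q_n) \leq \frac{c_0 M}{\delta_n^k}\cdot K(2\delta_n)^{k+1} = c_0 K M\, 2^{k+1}\,\delta_n \longrightarrow 0,
\]
contradicting $\sigma(q_n) \geq c$. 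The delicate points are the uniformity in $q$ of the constants and of the step $\delta_n \to 0$ (resting on Proposition \ref{Prop 1}), the standard covering-number estimate for a $k$-dimensional definable set of bounded $\mathcal{H}^k$, and, in a general o-minimal structure, the definability of $\omega$, which traces back to the Cauchy-Crofton formula of Definition \ref{A formula of integral geometric measure}.
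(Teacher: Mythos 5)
Your overall strategy---reducing everything to a one-variable \L ojasiewicz inequality for the envelope $\omega(\epsilon)=\sup\{\sigma(q):\rho(q)\le\epsilon\}$---has a genuine gap at its core: the functions $\rho(q)=\mathcal{H}^1(C_q)$ and $\sigma(q)=\mathcal{H}^{k+1}(f^{-1}(C_q\cap B))$ are in general \emph{not} definable in the given o-minimal structure, so neither is $\omega$. Hausdorff measures of the members of a definable family are produced by integration (e.g.\ via the Cauchy--Crofton formula of Definition \ref{A formula of integral geometric measure}), and integration does not preserve definability; already the arc length of a semi-algebraic family of curves (ellipses, say) is given by elliptic integrals and is not semi-algebraic. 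Hence the ``standard o-minimal \L ojasiewicz-type inequality for one-variable definable functions'' cannot be applied to $\omega$, and in the semi-algebraic case the assertion that ``$\omega$ is itself semi-algebraic'' is false, so the classical \L ojasiewicz inequality does not deliver $C\epsilon^{\alpha}$ either. Knowing only that $\omega$ is monotone, bounded and $\omega(0^+)=0$ is not enough to dominate it by $\varphi^{-1}$ with $\varphi$ \emph{definable}. A secondary problem: covering a $k$-dimensional definable set by $c_0M/\delta^k$ balls of radius $\delta$ does not follow from $\mathcal{H}^k\le M$ alone (the lower-order Vitushkin variations also enter); this is repairable using Proposition \ref{Prop 1}, but it is not a consequence of the Hausdorff bound by itself.

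The compactness part of your argument (fibers over points of $B$ close to $y^*$ lie in small neighbourhoods of $f^{-1}(y^*)$, whence $\sigma(q_n)\lesssim\delta_n\to0$) is sound in outline, and it could be salvaged by replacing $\omega$ with the genuinely definable function $\mu(y,\epsilon)=\sup\{d(x,f^{-1}(y)):x\in A,\ \|f(x)-y\|\le\epsilon\}$, to which the o-minimal \L ojasiewicz inequality does apply; but that is a different proof from the one you wrote. The paper sidesteps the definability issue entirely: Lemma \ref{Le45} cuts each $C_q$ into a uniformly bounded number of pieces on which the coordinates of $h\circ\gamma_q$ are monotone and applies the \L ojasiewicz-type inequality to the \emph{map} (its modulus of continuity), never to a measure regarded as a function of the parameter; the case $k=0$ of the theorem then follows from Hardt trivialization, and the case $k\ge1$ is reduced to $k=0$ by a Whitney (a)-stratification, a finite family of good projections $\pi\circ g$, and the coarea formula.
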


\noindent First we have:

\begin{Lemma}\label{Le45} Let $h:B\to \R^m$ be a continuous
definable map, and $B$ be a compact subset of $\R^n$. Then there
exists $\psi\in\Phi^1$, such that
\[\mathcal{H}^{1}(h(C_q\cap B) ) \leq \psi^{-1}(\mathcal{H}^1(C_q)),
~\textrm{for all }~ q\in Q.\]
\end{Lemma}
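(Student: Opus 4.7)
The plan is to couple the Cauchy--Crofton formula for the at-most-one-dimensional image $h(C_q\cap B)$ with the uniform continuity of $h$ on the compact set $B$, and then invoke an o-minimal regularization to convert the resulting definable estimate into one involving an element of $\Phi^1$.

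First, applying uniform cell decomposition to the definable set $\{(q,t)\in Q\times[0,1]:\gamma_q(t)\in B\}$, the preimage $\gamma_q^{-1}(B)$ is a union of at most $N$ closed subintervals of $[0,1]$, with $N$ independent of $q$. Hence $C_q\cap B$ is the disjoint union of at most $N$ arcs $A_1(q),\ldots,A_{N_q}(q)$, each connected, so $\mathrm{diam}(A_i(q))\leq \mathcal{H}^1(A_i(q))\leq \mathcal{H}^1(C_q)$. Since $h$ is continuous on the compact set $B$, the modulus of continuity
\[\omega(\delta)=\sup\{\|h(x)-h(y)\|:x,y\in B,\ \|x-y\|\leq \delta\}\]
is a definable, non-decreasing, continuous function with $\omega(0)=0$, and each image $h(A_i(q))$ is connected with $\mathrm{diam}(h(A_i(q)))\leq \omega(\mathcal{H}^1(C_q))$.

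Next I would apply the Cauchy--Crofton formula,
\[\mathcal{H}^1(h(C_q\cap B))=c(m,1)\int_{O^*(m,1)}\int_{\R}\#\bigl(h(C_q\cap B)\cap p^{-1}(y)\bigr)\,dy\,d\theta^*_{m,1}p,\]
and bound the integrand in two stages. Proposition \ref{Prop 1} applied to the definable projection $(q,p,z)\mapsto(q,p,p(z))$ on $\{(q,p,z):q\in Q,\ p\in O^*(m,1),\ z\in h(C_q\cap B)\}$ produces a uniform constant $M$ bounding the number of components of generic fibers; since $h(C_q\cap B)$ has dimension at most $1$ (being a continuous definable image of a $1$-dimensional set), generic fibers are finite and $\#\bigl(h(C_q\cap B)\cap p^{-1}(y)\bigr)\leq M$. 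On the other hand, for each $p$ the image $p(h(C_q\cap B))$ is a union of at most $N$ intervals each of length at most $\omega(\mathcal{H}^1(C_q))$, so
\[\int_{\R}1_{p(h(C_q\cap B))}(y)\,dy\leq N\omega(\mathcal{H}^1(C_q)).\]
Combining gives $\mathcal{H}^1(h(C_q\cap B))\leq c(m,1)\,M\,N\,\omega(\mathcal{H}^1(C_q))$.

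Finally, the function $\widetilde{\omega}(t):=c(m,1)\,M\,N\,\omega(t)$ is definable, continuous, non-decreasing and vanishes at $0$. A standard o-minimal regularization (piecewise $C^1$-cell decomposition combined with a small enlargement near $0$, then an odd extension to $\R$) produces an odd, strictly increasing, $C^1$-definable bijection $\psi:\R\to\R$, flat at $0$, with $\psi^{-1}(t)\geq \widetilde{\omega}(t)$ for every $t\geq 0$; this is the required element of $\Phi^1$. The subtlest step, I expect, is extracting the uniform counting bound $M$: one must assemble the parameter family so that the fibers of the auxiliary projection are exactly the slices $h(C_q\cap B)\cap p^{-1}(y)$, and use the dimension argument above so that $B_0$ of a generic fiber agrees with its cardinality, which is precisely the quantity appearing in the Cauchy--Crofton integrand.
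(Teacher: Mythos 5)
Your argument is correct, but it reaches the bound by a genuinely different route than the paper. The paper avoids integral geometry here entirely: it applies the uniform finiteness bound \cite[4.4]{D-M} to the definable family of parameters $t$ where some coordinate $h_i\circ\gamma_q$ fails to be strictly monotone, so that each $C_q$ splits into a uniformly bounded number of arcs on which every coordinate of $h\circ\gamma_q$ is monotone; on such an arc the length of the image is at most the sum of the coordinate oscillations, which \cite[C.17]{D-M} controls by $\psi_1^{-1}(\mathcal{H}^1(C_q))$, and the general case follows by summing over the uniformly many components of $C_q\cap B$. You instead run the Cauchy--Crofton formula on the at-most-one-dimensional image $h(C_q\cap B)$, exactly as in the proof of Theorem \ref{Th31}, with the fiber count made uniform in $(q,p,y)$ via a parametrized application of Proposition \ref{Prop 1}, and you control $\sup_p \mathcal{H}^1\bigl(p(h(C_q\cap B))\bigr)$ by the modulus of continuity $\omega$ of $h$ together with the uniform bound $N$ on the number of components of $C_q\cap B$; this is in effect a family version of Theorem \ref{Th31} and is more in the integral-geometric spirit of the rest of the paper, at the cost of invoking Hardt trivialization where the paper only needs monotonicity decomposition. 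Both arguments funnel into the same two shared ingredients --- a uniform component count for $C_q\cap B$ and a definable modulus of continuity for $h$ on the compact set $B$ --- and both end with the same o-minimal domination lemma producing $\psi\in\Phi^1$ with $\psi^{-1}\geq\widetilde{\omega}$ (your appeal to a ``standard regularization'' is precisely what \cite[C.17]{D-M} supplies, so you should cite it rather than re-derive it); two harmless inessentials in your write-up are the claimed continuity of $\omega$ (only monotonicity, definability and $\omega(0^+)=0$ are used) and the need to note that the infinite fibers $h(C_q\cap B)\cap p^{-1}(y)$ occur only for a null set of $(p,y)$, which your dimension remark does cover.
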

\begin{proof}[Proof of Lemma \ref{Le45}]
To prove the lemma for a family $(C_q)_{q\in Q}$ of
curves in $B$, applying \cite[C.17]{D-M} and the uniform bounds \cite[4.4]{D-M} to the family
\[(\{ t\in [0,1]: \exists i,  h_i\circ\gamma_q \textrm{ is not strictly monotone on
any neighbourhood of } t\ \})_{q\in Q},\]
where $h=(h_1,\cdots,h_m)$,  we have $\psi_1\in\Phi^1$
such that
\[\mathcal{H}^{1}(h(C_q) ) \leq \psi_1^{-1}(\mathcal{H}^1(C_q)),
~\textrm{for all}~ q\in Q.\]
 For a family of definable curves in $\R^n$, the number of connected components of $C_q\cap B$ is
 uniform bounded by $M$, for all $q\in Q$. Therefore, denoting the
connected components of $C_q\cap B$ by $C_{q,i}$ and applying the
above case, we get
\[\begin{array}{llll}
\mathcal{H}^{1}(h(C_q\cap B) ) &\leq\
\sum_i\mathcal{H}^1(h(C_{q,i}))
\leq\ \sum_i\mathcal\psi_1^{-1}(\mathcal{H}^1(C_{q,i}))\\
& \leq\ M\psi_1^{-1}(\mathcal{H}^1(C_{q})) \leq\
\psi^{-1}(\mathcal{H}^1(C_q)), ~\textrm{for all}~  q\in Q,
\end{array}
\]
 where $\psi\in \Phi^1$, $\psi(t)=\psi_1(t/M)$.
\end{proof}
\begin{proof}[Proof of Theorem \ref{Th3}]
The proof of the theorem is an adaptation of that of \cite[Theorem 5]{H}.\\
For $k = 0$ : Since $B$ is compact and the fibers of $f$ over $B$
are finite, by Trivialization \cite[Chapter 9 Theorem 1.2]{D1}, $f^{-1}(B) =
\cup_{j=1}^JA_j$, where $A_j$ is a definable compact set, and
$f|_{A_j}$ is injective. For each $j \in \{1, \ldots, J\}$, applying
the lemma to $(f|_{A_j})^{-1}$, we get $\psi_j \in \Phi^1$, such
that
\[
\mathcal{H}^1((f|_{A_j})^{-1}(C_q\cap B))\ \leq
\psi_j^{-1}(\mathcal{H}^1(C_q)), \ \ ~\textrm{for all } q\in Q.\]
 So
 \[
\mathcal{H}^1(f^{-1}(C_q\cap B))\ \leq\
\sum_j\varphi_j^{-1}(\mathcal{H}^1(C_q))\ \leq \
\varphi^{-1}(\mathcal{H}^1(C_q)), \ \ ~\textrm{for all } q\in Q,\]
where $\varphi\in \Phi^1$ with $\varphi^{-1}\geq
\ds\sum_{j=1}^J\varphi_j^{-1}$.
\\
 For $k \geq 1$: let $G_k(\R^m)$ denote the Grassmannian
of $k$-dimensional linear subspaces of $\R^m$. Define
\[\textrm{dist}(L, L') = \sup\{d(x, L'): x \in L, \|x\| = 1\}, ~\textrm{for}~ L \in G_k(\R^m), L' \in G_l(\R^m).\]
Let $\pi:\R^m\to\R^k$ denote the canonical projection. Choose a
finite subset $I$ of $O(m)$ and $\delta > 0$, so that for each $L
\in G_k(\R^m)$, there exists $g \in I$ so that
\[\textrm{dist}(L, (\pi\circ g)^{-1}(0)) > \delta.\]
By \cite{L2} we can choose a stratification $\mathcal{S}$ of $A$
satisfying Whitney's condition (a), so that for each $S\in
\mathcal{S}$, rank$f|_S$ is constant and either $f(S)\subset I_k(f)$
or $f(S)\cap I_k(f)=\emptyset$. Let $\mathcal{J} = \{S \in
\mathcal{S}: \dim S - \rank f|_S = k\}$. We can refine the
stratification so that for each $g \in I$ and $T\in \mathcal{J}$,
the definable function
\[d(T, g)(x) = \textrm{dist}(T_x(T\cap f^{-1}(f(x))), (\pi \circ g)^{-1}(0)) - \delta\]
has a constant sign on $T$.\\
For each $ S \in \mathcal{S}\backslash \mathcal{J}$ we have $\dim (S
\cap f^{-1}(y)) \leq k - 1$  for all  $y \in I_k(f)$, therefore,
$\mathcal{H}^{k+1}(f^{-1}(C_q\cap I_k(f))\backslash \cup_{T\in
\mathcal{J}}T) = 0$ whenever $q\in Q$. \\
For each $T\in \mathcal{J}$, there is a $g_T \in I$ so that $d(T,
g_T)$ is positive on $T$. Hence, by Whitney's condition (a),
$\dim(f^{-1}(y) \cap (\pi \circ g_T)^{-1}(w) \cap \textrm{cl}(T))
\leq 0, ~\textrm{for all}~ y \in I_k(f), w \in \R^k.$\\
 For each $g \in I$, let $A_g =
\cup\{\textrm{cl}(T): T \in \mathcal{J}, g_T =g\}.$ Using the coarea
formula \cite[Theorem 3.2.22 (3)]{F} and applying case $k = 0$ with $A :=
A_g$, $f:= (f, \pi\circ g)|_{A_g}$, and $((C_q\times w))_{(q, w)\in
Q\times \R^k}$ for the family of curves, we get
\begin{eqnarray}
\mathcal{H}^{k+1}(f^{-1}(C_q\cap B))
& \leq & \sum_{g \in I}\mathcal{H}^{k+1}(g(A_g\cap f^{-1}(C_q\cap B))) \nonumber \\
& = &\sum_{g \in I}\int_{g(A_g\cap f^{-1}(C_q\cap B))}\
d\mathcal{H}^{k+1} \nonumber \\
& = &\sum_{g \in I}\int \mathcal{H}^1(g(A_g\cap f^{-1}(C_q\cap B))
\cap \pi^{-1}(w)) dw \nonumber\\
& = &\sum_{g \in I}\int \mathcal{H}^1(A_g\cap f^{-1}(C_q\cap B))\cap g^{-1}(\pi^{-1}(w))) dw \nonumber \\
& = & \sum_{g \in I}\int \mathcal{H}^1(A_g \cap (f, \pi \circ g)^{-1}[(C_q\times w)\cap(B\times \pi(g(A)))])dw \nonumber \\
\nonumber
&\leq & \sum_{g \in I} \int 1_{\pi\circ g(A)}\varphi^{-1}(\mathcal{H}^1(C_q))dw\\
&\leq & \sum_{g \in I} \mathcal{H}^k(\pi\circ g(A))
\varphi^{-1}(\mathcal{H}^1(C_q)) \nonumber
\\
&\leq & \bar{\varphi}^{-1}(\mathcal{H}^1(C_q)) \ , \textrm{ for all
} q\in Q, \nonumber
\end{eqnarray}
where  $\bar{\varphi}\in\Phi^1$ of the form $\bar{\varphi}(t)=\varphi(t/K)$. \\
 If $f$ is a semi-algebraic map, then by
the \L ojasiewicz inequality $\varphi$ has the form
$\varphi^{-1}(y)=C\|y\|^{\alpha}$.
\end{proof}

Note that the above estimate is `effective', not explicit.

\begin{Ex}\label{Example 3.}
\item a) Applying the theorem to the family of segments, we get
$\varphi\in \Phi^1$, such that
\[\mathcal{H}^{k+1}(f^{-1}([y,z] \cap B)) \leq \varphi^{-1}(\|y-z\|), \textrm{whenever } y,z\in\R^n.\]
In particular, if $f$ is a semi-algebraic or semi-Pfaffian map, then
there exist $C,\alpha>0$ such that
\[\mathcal{H}^{k+1}(f^{-1}([y,z]\cap B)) \leq C\|y-z\|^{\alpha}.\]
\item b) In general, for the semi-algebraic case one can not choose
$C$ depending only on the diagram of $f$, or $\alpha=1$ in the
estimate of the preceding theorem, e.g. for $f_k(x)=kx^n$\ with
$n\geq 2,
k>0$, 
${\mathcal H}^1(f_k^{-1}([0,y])=\ds\frac 1{\sqrt[n]{k}}\sqrt[n]{y}$,
for every $y>0$.
\\
\item c) Let $f(x)=e^{-\frac 1{|x|}}$. Then $f$ is definable in the
o-minimal structure $\R_{exp}$, and $f^{-1}([0,y])=[0,-\ds\frac
1{\ln|y|}]$. Since $\ds\frac 1{y^{\alpha}\ln|y|}\to \infty$, when
$y\to 0$, there does not exist $C, \alpha>0$ so that ${\mathcal
H}^1(f^{-1}([0,y])\leq C|y|^{\alpha}$ for all $y\in [0,1]$.
\end{Ex}

\section{Morse-Sard's Theorem}

\begin{Th}
Let $f: A \rightarrow \R^n$ be a definable map. Suppose $A = \cup_{i
\in I}C_i$ is a finite union of $C^1$ definable manifolds $C_i$,
such that $f|_{C_i}$ is of class $C^1$. For each $s \in \N$ and $i
\in I$, let
\[\Sigma_s(f, C_i) = \{x \in C_i : \rank df|_{C_i}(x) < s\}~ \textrm{and}~ \Sigma_s(f, A) = \bigcup_{i \in I}\Sigma_s(f, C_i)\]
Then $C_s(f, A) = f(\Sigma_s(f, A))$ is a definable set of dimension
$< s$. In particular, $\mathcal{H}^s(C_s(f, A)) = 0$.
\end{Th}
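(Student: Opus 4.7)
The plan is to reduce to a single stratum $C_i$ via the finiteness of $I$, then apply $C^1$ cell decomposition to the definable set $\Sigma_s(f, C_i)$ in such a way that the rank of $d(f|_{C_i})$ becomes locally constant on each piece, and conclude by the classical constant-rank theorem applied locally.

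First I would write $C_s(f, A) = \bigcup_{i \in I} f(\Sigma_s(f, C_i))$ and use $\dim(X \cup Y) = \max(\dim X, \dim Y)$ for definable sets (\cite[Chapter 4]{D1}) to reduce to proving $\dim f(\Sigma_s(f, C_i)) < s$ for one fixed $i$. Set $C = C_i$ and $\Sigma = \Sigma_s(f, C)$; since the rank drop is a definable condition, $\Sigma$ is a definable subset of $C$.

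Next I would invoke the $C^1$ Cell Decomposition Theorem (\cite[Chapter 7]{D1}), applied compatibly with $C$ and with the definable level sets $\{x \in C : \rank d(f|_C)_x = k\}$ for $k = 0, \ldots, s-1$, to partition $\Sigma$ into finitely many $C^1$ definable cells $D_j$, each a $C^1$ submanifold of $C$ on which $\rank d(f|_C)_x$ equals some constant $k_j < s$. Since $T_x D_j \subset T_x C$, the chain rule gives
\[
r_j := \rank d(f|_{D_j})_x \leq k_j < s,
\]
and after a further cell refinement applied to the definable function $x \mapsto \rank d(f|_{D_j})_x$ we may assume $r_j$ is itself constant on $D_j$. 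The classical $C^1$ constant-rank theorem applied locally to $f|_{D_j}$ then shows that $f(D_j)$ is locally contained in a $C^1$ submanifold of $\R^n$ of dimension $r_j$, so $\dim f(D_j) \leq r_j < s$, and taking the maximum over the finitely many $j$ yields $\dim f(\Sigma) < s$.

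The ``in particular'' statement $\mathcal{H}^s(C_s(f, A)) = 0$ follows because any definable set of dimension strictly less than $s$ decomposes into finitely many $C^1$ submanifolds of dimension at most $s - 1$, each of which has vanishing $s$-dimensional Hausdorff measure. The main subtlety lies in arranging the cell decomposition so that the rank of $d(f|_{D_j})$ itself (and not merely that of $d(f|_C)$) is locally constant on each cell, in order to invoke the constant-rank theorem; this is exactly where o-minimality is essential, via the possibility of iterating $C^1$ Cell Decomposition compatibly with the finitely many definable level sets of the rank.
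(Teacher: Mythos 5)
Your proof is correct and follows essentially the same route as the paper, which for this theorem simply defers to \cite{L3}: decompose $\Sigma_s(f,A)$ into finitely many $C^1$ definable cells on which the restriction of $f$ has constant rank $r<s$, then combine the constant rank theorem with the definable dimension theory of \cite[Chapter 4]{D1} to bound the dimension of each image. The one step to make explicit is why your ``further refinement'' terminates: a cell $E$ obtained by refining $D_j$ satisfies $\rank d(f|_E)_x=\rank d(f|_{D_j})_x$ only when $E$ is open in $D_j$, so the lower-dimensional cells of the refinement must be handled by induction on dimension (which works because the rank can only drop under restriction, so the bound $r<s$ persists at every stage).
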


\begin{proof}
The proof is similar to \cite{L3}.
\end{proof}

\section{Remarks} The results in this paper still hold true
for tame sets (see  \cite{D-M}, \cite{S}, \cite{T} for the
definitions) with global changing to local. Applying theorems 1 and
2, one can get the explicit estimates for sub-Pfaffian case (see
\cite{G-V-Z}).
}
\bibliographystyle{amsalpha}

}
\end{document}